\newtheorem{theorem}{Theorem}[section]
\newtheorem{proposition}[theorem]{Proposition}
\theoremstyle{definition}
\newtheorem{definition}[theorem]{Definition}
\newtheorem{example}[theorem]{Example}
\newtheorem{remark}[theorem]{Remark}
\newtheorem{exercise}[theorem]{Exercise}
\DeclareMathOperator{\Hom}{Hom}
\DeclareMathOperator{\id}{id}
\DeclareMathOperator{\Obj}{Obj}
\DeclareMathOperator{\Img}{Im}
\DeclareMathOperator{\Coim}{Coim}
\DeclareMathOperator{\Ker}{Ker}
\DeclareMathOperator{\cok}{cok}
\DeclareMathOperator{\img}{im}
\DeclareMathOperator{\coim}{coim}
\DeclareMathOperator{\Cok}{Cok}
\newcommand{\C}{\mathbf{C}}
\newcommand{\A}{\mathbf{A}}
\newcommand{\Cop}{\C^{op}}
\newcommand{\Aop}{\A^{op}}
\newcommand{\Set}{\mathbf{Set}}
\newcommand{\Z}{\mathbb{Z}}
\newcommand{\Q}{\mathbb{Q}}
\newcommand{\Grp}{\mathbf{Grp}}
\newcommand{\Ab}{\mathbf{Ab}}
\newcommand{\Ring}{\mathbf{Ring}}
\mathchardef\mhyphen="2D
\newcommand{\RMod}{R\mhyphen\mathbf{Mod}}
\newcommand\blfootnote[1]{%
  \begingroup
  \renewcommand\thefootnote{}\footnote{#1}%
  \addtocounter{footnote}{-1}%
  \endgroup
}
\title{\large{\textbf{SOME NOTES ON DIAGRAM CHASING AND DIAGRAMMATIC PROOFS IN CATEGORY THEORY}}}
\author{\normalsize{VALENTINO VITO}}
\date{}
\begin{document}
\maketitle

\begin{abstract}
\noindent {\footnotesize Diagram chasing is a customary proof method used in category theory and homological algebra. It involves an element-theoretic approach to show that certain properties hold for a commutative diagram. When dealing with abelian categories for the first time, one would work using a diagrammatic approach without relying on the notion of elements. However, constantly manipulating universal properties of various diagrams can be quite cumbersome. That said, we believe that it is still important to draw a contrast between both viewpoints in order to motivate the field of category theory. We focus our scope to the short five lemma, one of the more elementary diagram lemmas, and present a quick exposition on relevant subjects. Moreover, we give an original proof of the short five lemma using the universal property of pullbacks.\\[-2mm]

\noindent \textit{Keywords:} Abelian category, diagram chasing, diagrammatic proof, exact sequence, short five lemma.}
\end{abstract}

\blfootnote{Department of Mathematics, Faculty of Mathematics and Natural Sciences (FMIPA), Universitas Indonesia, Depok 16424, Indonesia\\
Email address: \textbf{valentino.vito@sci.ui.ac.id}}
{\footnotesize\tableofcontents}

\section{Introduction}

Category theory is a prevalent branch of mathematics able to translate various properties of mathematical structures into arrow diagrams. Arrows in this context refer to morphisms, which in some sense can be considered as mappings or arrows between objects. But since our categorical objects are not necessarily in the form of sets, some care is required to be able to communicate the subject without working with elements. A clear advantage of using the language of categories is the gained insight of the big picture and how seemingly different concepts can be unified.

Of course, it would be harder to appreciate the depth and utility of categories without previous exposure to a lot of mathematical structures. We thus assume that the reader is comfortable with the basics of modern algebra. We do provide a short rundown on homological algebra in the context of abelian groups. In particular, we introduce the notion of a diagram chase as a way to motivate the categorical nature of the sections that follow. In addition, to prevent the exposition from becoming too long-winded, we focus the problem to the short five lemma.

The short five lemma is a member of a family of lemmas known as diagram lemmas. As the name might suggest, these lemmas involve deducing some properties of a commutative diagram. The short five lemma is a special case of the more popular five lemma. It is picked as our main subject of interest as it is one of the easier diagram lemmas to state and prove (since most diagram lemmas require a relatively long proof when written down). The term "short" refers to the fact that the lemma deals with short exact sequences, while the term "five" refers to the number of objects in each short exact sequence.

We consider both element-theoretic (via a diagram chase) and arrow-theoretic arguments (by manipulating universal properties in the form of diagrams) to prove the short five lemma. We do not wish to apply advanced results such as the Freyd-Mitchell embedding theorem, which gives the existence of a property-preserving embedding of a small abelian category to some category of $R$-modules.

\begin{theorem}[Freyd-Mitchell embedding theorem, \cite{junhan}]
Given a small abelian category $\A$, there exists a ring $R$ and an exact full embedding $\A \to \RMod$.
\end{theorem}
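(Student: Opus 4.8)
The plan is a two-step argument --- enlarge, then descend --- and since neither step is short I only describe the strategy, referring to \cite{junhan} for a complete treatment. First note that $\A^{op}$ is again small abelian, and that a fully faithful exact \emph{contravariant} functor $\A^{op}\to\RMod$ is precisely the datum of a fully faithful exact covariant functor $\A\to\RMod$. So it suffices to show that every small abelian category $\mathbf{B}$ admits, for some ring $R$, a fully faithful exact contravariant functor $\mathbf{B}\to\RMod$, and then to apply this with $\mathbf{B}=\A^{op}$.

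\emph{Step 1: embed $\mathbf{B}$ into a Grothendieck category.} I would take $\mathcal{G}:=\mathrm{Lex}(\mathbf{B}^{op},\Ab)$, the category of left exact additive functors $\mathbf{B}^{op}\to\Ab$, together with the Yoneda assignment $h\colon b\mapsto\Hom_{\mathbf{B}}(-,b)$, which lands in $\mathcal{G}$ since each representable functor is left exact. Fully faithfulness of $h$ is the Yoneda lemma, and left exactness of $h$ is immediate. Two points then need proof: (i) $\mathcal{G}$ is a Grothendieck category --- it is a full reflective subcategory of the presheaf category $[\mathbf{B}^{op},\Ab]$ closed under limits, and the essential input is that the reflector $a\colon[\mathbf{B}^{op},\Ab]\to\mathcal{G}$ is \emph{exact}, after which cocompleteness, the axiom AB5, and the generating set $\{h(b)\}$ all follow; and (ii) $h$ is right exact --- for a short exact sequence $0\to b'\to b\to b''\to 0$ in $\mathbf{B}$, the cokernel in $\mathcal{G}$ of $h(b')\to h(b)$ is $a$ applied to the presheaf cokernel, and this equals $h(b'')$ because any morphism into $b''$ lifts to $b$ after pulling the epimorphism $b\to b''$ back along it. Granting (i) and (ii), $h\colon\mathbf{B}\to\mathcal{G}$ is a fully faithful exact embedding.

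\emph{Step 2: descend from $\mathcal{G}$ to modules.} A Grothendieck category has enough injectives, so I choose an injective object $E$ into which every $h(b)$ embeds and put $R:=\operatorname{End}_{\mathcal{G}}(E)$; then $\Phi:=\Hom_{\mathcal{G}}(-,E)\colon\mathcal{G}\to\RMod$ is contravariant, exact because $E$ is injective, and faithful on $h(\mathbf{B})$ because each $h(b)$ embeds in the injective $E$. Thus $\Phi\circ h\colon\mathbf{B}\to\RMod$ is exact, faithful, and contravariant. The remaining issue is \emph{fullness}: a priori $\Phi$ need not be full, so $E$ must be enlarged, and this is exactly where the smallness of $\mathbf{B}$ is used. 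For each of the set-many pairs $b,b'$, the $R$-linear maps $\Phi(h(b))\to\Phi(h(b'))$ that are not induced by a morphism $b'\to b$ form a set; each such ``phantom'' can be destroyed by replacing $E$ with a suitably larger injective --- equivalently by enlarging the endomorphism ring $R$ --- and a transfinite iteration bounded by a cardinal attached to $\mathbf{B}$ stabilises at an injective $E$ for which $\Phi\circ h$ is full.

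Applying Steps 1 and 2 with $\mathbf{B}=\A^{op}$ then produces a ring $R$ and a fully faithful exact functor $\A\to\RMod$. I expect the two real obstacles to be the exactness of the reflector $a$ in Step 1 --- which is the actual reason the Yoneda embedding becomes right exact, and which rests on localization (sheafification) theory --- and the fullness argument of Step 2, where the hypothesis that $\A$ is small is indispensable and the bookkeeping is delicate because the ring $R$ changes each time $E$ is enlarged.
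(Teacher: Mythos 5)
First, a point of comparison: the paper does not prove this theorem at all --- it is stated as a black box, cited to \cite{junhan}, and explicitly never used in the sequel ("we do not require it throughout our discussion"). So there is no proof in the paper to measure your argument against; what you have written is a strategy outline for a deep result, and it has to be judged on its own terms. On those terms, your overall architecture is the standard one (Gabriel--Mitchell): embed the small abelian category into the Grothendieck category $\mathrm{Lex}(\mathbf{B}^{op},\Ab)$ via Yoneda, then pass to modules over the endomorphism ring of a suitable injective. Step 1 is essentially right, although the exactness of the reflector (equivalently, right exactness of the Yoneda embedding into $\mathrm{Lex}$) is precisely the hard content there and you only name it.

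The genuine gap is in Step 2, the fullness argument. As you describe it, you propose to destroy each non-induced ("phantom") map $\Hom_{\mathcal G}(h(b),E)\to\Hom_{\mathcal G}(h(b'),E)$ by enlarging $E$, iterating transfinitely until the process stabilises. But enlarging $E$ replaces $R=\mathrm{End}_{\mathcal G}(E)$ by a different ring and replaces the functor $\Hom_{\mathcal G}(-,E)$ wholesale, so there is no identification of "the same phantom" at successive stages, no reason a phantom killed at one stage does not reappear (or new ones arise) at the next, and no argument that the iteration terminates; nothing in your sketch supplies the needed comparison maps or a cardinality bound that actually controls this. The classical proof does not iterate at all: one proves a one-shot criterion (Mitchell's lemma, usually stated dually for a projective generator $P$) that $\Hom(P,-)$ is \emph{full} and faithful on all objects admitting a presentation by copies of $P$, and then uses smallness of $\mathbf{B}$ once, to choose a single sufficiently large injective (resp.\ projective) so that every $h(b)$ is appropriately copresented by $E$. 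Until you either prove such a criterion or give a genuine stabilisation argument for your iteration, the fullness claim --- which is the whole point of the theorem, faithful exact embeddings being much easier --- is unsupported; together with the unproved exactness of the reflector in Step 1, this leaves the proposal as a plausible roadmap rather than a proof, which is consistent with the paper's own decision to cite \cite{junhan} rather than argue it.
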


The Freyd-Mitchell embedding theorem gives us a shortcut to deduce various diagram lemmas for (small) abelian categories from their counterpart in the category of modules. While this theorem is rightly important in the field, we do not require it throughout our discussion.

This article is largely expository and is aimed toward advanced undergraduate and graduate students as an introduction to category theory. We first provide a diagram chase for the short five lemma for abelian groups as to motivate the subject. We then give a quick review of elementary category theory before delving into abelian categories. Afterwards, we build the basic theory of homological algebra to prove the short five lemma for abelian categories. Finally, we present an original proof of the short five lemma in order to give another point of view on the problem. For brevity, we will omit proofs that do not enhance the exposition and refer the reader to other sources in this case. Alternatively, the reader can try to independently fill in these gaps, most of which are listed as an exercise in Section \ref{exer}.

\section{Diagram chasing}\label{chase}

Let $A$, $B$ and $C$ be abelian groups, and let $f\colon A \to B$ and $g\colon B \to C$ be group homomorphisms. We can construct the following sequence of abelian groups:
\begin{equation}\label{seq}\begin{tikzcd}
0 \arrow[r] & A \arrow[r, "f"] & B \arrow[r,"g"] & C \arrow[r] & 0
\end{tikzcd}\end{equation}
where $0$ denotes the trivial group and the homomorphisms $0 \to A$ and $C \to 0$ are clear.

\begin{definition}\label{ext1}
We say that (\ref{seq}) is \textit{exact} if $f$ is injective, $g$ is surjective, and $\Img f = \Ker g$. We then call the sequence a \textit{short exact sequence}.
\end{definition}

In a similar way, we can also define short exact sequences for $R$-modules and groups in general. Short exact sequences form a subfamily of the more general \textit{exact sequences}, which allow the sequence to be (doubly) infinite.

As an illustration of a diagram chase, we prove the short five lemma for abelian groups.

\begin{theorem}[Short five lemma]\label{sfl1}
Let
\begin{equation}\label{seq1}\begin{tikzcd}
0 \arrow[r] & A \arrow[r, "f"] & B \arrow[r,"g"] & C \arrow[r] & 0
\end{tikzcd}\end{equation}
and
\begin{equation}\label{seq2}\begin{tikzcd}
0 \arrow[r] & A' \arrow[r, "f'"] & B' \arrow[r,"g'"] & C' \arrow[r] & 0
\end{tikzcd}\end{equation}
be short exact sequences of abelian groups which are a part of the following diagram:
\begin{equation}\label{seq3}\begin{tikzcd}
0 \arrow[r] & A \arrow[r, "f"] \arrow[d, "\alpha"] & B \arrow[r,"g"] \arrow[d, "\beta"] & C \arrow[r] \arrow[d, "\gamma"] & 0 \\
0 \arrow[r] & A' \arrow[r, "f'"] & B' \arrow[r,"g'"] & C' \arrow[r] & 0
\end{tikzcd}\end{equation}
Assume that the diagram commutes, so that $f'\alpha=\beta f$ and $g'\beta=\gamma g$. We have the following:
\begin{enumerate}[{\normalfont(i)}]
    \item If $\alpha$ and $\gamma$ are injective, then $\beta$ is injective;
    \item If $\alpha$ and $\gamma$ are surjective, then $\beta$ is surjective;
    \item If $\alpha$ and $\gamma$ are bijective, then $\beta$ is bijective.
\end{enumerate}
\end{theorem}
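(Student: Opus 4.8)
The plan is to carry out a standard element-wise diagram chase, treating each abelian group as its underlying set and each homomorphism as an ordinary map, and repeatedly using exactness to pass between the statements ``lies in a kernel'' and ``lies in an image.'' Part (iii) is immediate from (i) and (ii), so all the work is in the first two statements, and each of those is a short pursuit of a single element around the diagram.

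For (i), I would begin with $b \in B$ satisfying $\beta(b) = 0$ and aim to conclude $b = 0$. Pushing $b$ to the right, commutativity gives $\gamma(g(b)) = g'(\beta(b)) = 0$, and injectivity of $\gamma$ forces $g(b) = 0$; exactness of the top row at $B$ then produces $a \in A$ with $f(a) = b$. Pushing down, $f'(\alpha(a)) = \beta(f(a)) = \beta(b) = 0$, and since $f'$ is injective we get $\alpha(a) = 0$, so injectivity of $\alpha$ gives $a = 0$ and hence $b = f(a) = 0$.

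For (ii), I would take an arbitrary $b' \in B'$ and construct a preimage in two stages. First lift $g'(b') \in C'$ through the surjection $\gamma$ to some $c \in C$, then lift $c$ through $g$ (surjective by exactness of the top row at $C$) to some $b \in B$. In general $\beta(b) \ne b'$, but $g'(b' - \beta(b)) = g'(b') - \gamma(g(b)) = g'(b') - \gamma(c) = 0$, so exactness of the bottom row at $B'$ lets us write $b' - \beta(b) = f'(a')$ for some $a' \in A'$; surjectivity of $\alpha$ then gives $a \in A$ with $\alpha(a) = a'$, and a one-line computation $\beta(b + f(a)) = \beta(b) + f'(\alpha(a)) = \beta(b) + f'(a') = b'$ exhibits the desired preimage.

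The chase presents no serious obstacle; the step requiring the most care is the ``correction term'' maneuver in (ii), where one must observe that the naive lift $b$ solves the problem only modulo $\Img f'$ and then repair it using surjectivity of $\alpha$. It is also worth noting explicitly which hypotheses each part actually consumes — for instance (i) uses only that $\gamma$ and $f'$ are injective, not injectivity of $f$ — since this is exactly the bookkeeping that foreshadows the finer statements of the general five lemma.
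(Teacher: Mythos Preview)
Your proposal is correct and follows essentially the same diagram chase as the paper's proof; the only cosmetic differences are that you verify injectivity via $\ker\beta=0$ rather than $\beta(b_1)=\beta(b_2)\Rightarrow b_1=b_2$, and your correction term in (ii) carries the opposite sign. Your closing remark about which hypotheses are actually consumed is a nice addition not present in the paper.
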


The idea of the proof is to traverse the diagram in (\ref{seq3}), taking elements from each object visited and keeping in mind the commutativity and exactness of the diagram. This method is what is known as "chasing the diagram".

\begin{proof}[Proof of Theorem \ref{sfl1}]
(i) Fix arbitrary elements $b_1,b_2 \in B$ and suppose that $\beta(b_1)=\beta(b_2)$. We will show that $b_1=b_2$. By commutativity, we have
\[\gamma g(b_1)=g'\beta(b_1)=g'\beta(b_2)=\gamma g(b_2).\]
Since $\gamma$ is injective by assumption, it follows that $g(b_1)=g(b_2)$, or alternatively, $g(b_1-b_2)=0$. This implies that $b_1-b_2 \in \Ker g=\Img f$ by exactness of (\ref{seq1}). Hence, there exists $a \in A$ such that $f(a)=b_1-b_2$. We once again use the fact that the diagram commutes to obtain
\[f'\alpha(a)=\beta f(a)=\beta(b_1-b_2).\]
Since $\beta(b_1-b_2)=0$ and both $f'$ and $\alpha$ are injective, we have $a=0$. This shows that $b_1-b_2=f(a)=0$, which completes our proof.\\ \\
(ii) Fix an arbitrary element $b \in B'$. We want to show that $\beta(z)=b$ for some $z \in B$. Since $\gamma$ and $g$ are surjective, there is an $x \in B$ such that $\gamma g(x)=g'(b)$. Hence, by commutativity,
\[g'\beta(x)=\gamma g(x)=g'(b),\]
which gives us $g'(\beta(x)-b)=0$. By exactness of (\ref{seq2}), it follows that $\beta(x)-b \in \Ker g' = \Img f'$. Recalling that $\alpha$ is surjective by hypothesis, there is an $a \in A$ such that $f'\alpha(a)=\beta(x)-b$. Using commutativity once more, we obtain
\[\beta f(a)=f'\alpha(a)=\beta(x)-b.\]
Thus we have $\beta(x-f(a))=b$, as desired.\\ \\
(iii) This obviously follows from (i) and (ii).
\end{proof}

\begin{remark}
The short five lemma still holds for groups in general. For the sake of continuity for future sections, we limit our scope to abelian groups.
\end{remark}

As we can see, it requires a lot of notational juggling not only to prove the lemma, but also to state it. This is one reason why many people do not like to write down every detail of a diagram chase. Instead, we would usually present the explanation visually by sequentially pointing at which object is currently under consideration and mentioning how to move from one object of the diagram to another object. For example, in explaining the chase for Theorem \ref{sfl1}(ii), one would first point at $b \in B'$ before moving on to $g'(b) \in C'$. Then one would claim surjectivity and commutativity to pick some element $x \in B$ and consider $\beta(x)-b \in B'$, from which exactness is applied so that we can circle around to grab $a \in A$, and so on.

An interesting facet of the proof is that it does not explicitly mention that we are working in abelian groups. In fact, if we were to prove the short five lemma for $R$-modules, we could copy the previous arguments verbatim to obtain the proof. This leads us to ask whether we can create a more general setting where the concepts of exactness and commutativity are defined so that these ideas are still preserved in this new setting. This is where the structure of abelian categories comes into play.

The next few sections are dedicated to quickly build the theory of abelian categories. We will not delve into the subtleties of modern homological algebra. Nonetheless, we will present some insights on how abelian categories can serve as a very abstract, yet intriguing construction to generalize the structure of abelian groups. In Section \ref{gsfl}, for example, we show how Theorem \ref{sfl1}(i) and (ii) are "dual" statements in some sense.

\section{Some categorical background}

Here, we give a rapid treatment of the category theory needed to introduce abelian categories. Readers unfamiliar with categories can consult \cite{lang,leinster} for a more comprehensive introduction.

\begin{definition}\label{cats}
A \textit{category} $\C$ consists of:
\begin{enumerate}
    \item A class of \textit{objects} denoted as $\Obj(\C)$;
    \item For every $A,B \in \Obj(\C)$, a set $\Hom(A,B)$ called a \textit{hom-set} consisting of \textit{morphisms} (where $A$ is the \textit{source} and $B$ is the \textit{target}) which can be written as $f\colon A \to B$;
    \item For every $A,B,C \in \Obj(\C)$, a composition rule
    \[\circ\colon \Hom(B,C) \times \Hom(A,B) \to \Hom(A,C)\]
    which sends $g \in \Hom(B,C)$ and $f \in \Hom(A,B)$ to $gf=g \circ f \in \Hom(A,C)$,
\end{enumerate}
such that the following properties hold:
\begin{enumerate}
    \item The hom-sets $\Hom(A,B)$ and $\Hom(C,D)$ are disjoint unless $A=C$ and $B=D$;
	\item For every $A \in \Obj(\C)$, there exists an \textit{identity morphism} $\id_A \in \Hom(A,A)$ such that $f \id_A=f$ and $\id_A g=g$ for all appropriate morphisms $f$ and $g$ (appropriate here means that it makes sense to write $f \id_A$ and $\id_A g$);
	\item The composition rule is associative, that is, $(hg)f=h(gf)$ for all appropriate morphisms $f,g,h$.
\end{enumerate}
\end{definition}

Informally, a class can be considered as a collection much like a set but is allowed to contain other mathematical objects without the fear that it might be too big. For example, unlike a set which cannot contain all sets, we can construct a class of all sets without running into foundational issues. This informal definition of a class is good enough for our purposes.

\begin{remark}
Our definition requires hom-sets to be sets. Some authors allow hom-sets to be proper classes, while defining \textit{locally small} categories to coincide with our definition of categories.
\end{remark}

\begin{example}
As a prototypical example, $\Set$ is a category such that $\Obj(\Set)$ is the class of all sets, each $\Hom(A,B)$ is the set of all set-functions $A \to B$, and the composition rule is precisely the standard set-function composition. The axioms listed in Definition \ref{cats} can be shown to hold for $\C=\Set$.
\end{example}

\begin{example}
Other examples include the category of all groups and group homomorphisms denoted as $\Grp$ and its \textit{full} subcategory $\Ab$ of abelian groups. By full, we mean that $\Hom_\Ab(A,B)=\Hom_\Grp(A,B)$ for every pair of abelian groups $A$ and $B$. Given a ring $R$, the category of $R$-modules and module homomorphisms is denoted as $\RMod$.
\end{example}

\begin{example}
Given a category $\C$, its \textit{opposite} (or \textit{dual}) $\Cop$ is a category such that $\Obj(\Cop)=\Obj(\C)$ and $\Hom_{\Cop}(A,B)=\Hom_\C(B,A)$ for all objects $A$ and $B$. Additionally, the composition $\circ_{\Cop}$ in $\Cop$ is obtained from $\circ_\C$ by the identity $f \circ_{\Cop} g=g \circ_\C f$. We have that $(\Cop)^{op}=\C$.
\end{example}

An opposite category is, roughly speaking, obtained from the original category by "reversing" its arrows/morphisms. Diagram commutativity is preserved after reversing arrows to the opposite category. For illustration, a diagram in an ambient category $\C$:
\begin{equation}\label{tri1}\begin{tikzcd}
A \arrow[d,"f"'] \arrow[r, "h"] & B \\
C \arrow[ur,"g"']
\end{tikzcd}\end{equation}
becomes a diagram in an ambient category $\Cop$ after the arrows are reversed as such:
\begin{equation}\label{tri2}\begin{tikzcd}
A & B \arrow[l, "h"'] \arrow[dl,"g"] \\
C \arrow[u,"f"]
\end{tikzcd}\end{equation}
Observe that (\ref{tri1}) commutes if and only if (\ref{tri2}) commutes---that is to say, $f \circ_\C g=h$ if and only if $g \circ_{\Cop} f=h$.

\begin{definition}
Let $f\colon A \to B$ be a morphism. It is a \textit{monomorphism} (or is \textit{monic}) if for every pair of morphisms $\alpha',\alpha''$ such that $f\alpha'=f\alpha$, we have $\alpha'=\alpha''$.
It is an \textit{epimorphism} (or is \textit{epic}) if for every pair of morphisms $\beta',\beta''$ such that $\beta' f=\beta'' f$, we have $\beta'=\beta''$. It is an \textit{isomorphism} if it has an inverse $g\colon B \to A$ such that $gf=\id_A$ and $fg=\id_B$. Two objects $A$ and $B$ are \textit{isomorphic} if there exists an isomorphism $f\colon A \to B$.
\end{definition}

The inverse of an isomorphism $f$ is necessarily unique, so we denote it as $f^{-1}$. Monomorphisms and epimorphisms are \textit{dual} in the sense that monomorphisms in $\C$ are precisely epimorphisms in $\Cop$. An isomorphism is both a monomorphism and an epimorphism, but the converse need not hold. For example, in the category $\Ring$ of rings, the inclusion map $\iota\colon \Z \to \Q$ can be proved to be both monic and epic even though it does not have an inverse.

In certain categories such as $\Set$ and $\Ab$, monomorphisms, epimorphisms and isomorphisms are precisely injective, surjective and bijective functions, respectively. In fact, we can consider the aforementioned three properties of morphisms to be a spiritual representation of injection, surjection and bijection. These equivalences do not translate well to all categories, however, as the inclusion $\iota\colon \Z \to \Q$ is epic in $\Ring$ but not surjective.

\begin{definition}
An object $I \in \Obj(\C)$ is called \textit{initial} if it admits a unique morphism $I \to A$ for every $A \in \Obj(\C)$. An object $T \in \Obj(\C)$ is called \textit{terminal} if it admits a unique morphism $A \to T$ for every $A \in \Obj(\C)$. A \textit{zero object} $0 \in \Obj(\C)$ is both initial and terminal.
\end{definition}

Initial, terminal and zero objects are each unique up to unique isomorphism whenever they exist, so we usually say \textit{the} initial/terminal/zero object. This abuse of language is commonplace, and we will continue this practice with other categorical properties such as products and kernels in the future. As with monomorphisms and epimorphisms, initial and terminal objects are dual concepts since an initial object in $\C$ is terminal in $\Cop$, and vice versa.

\begin{example}
The empty set $\emptyset$ is initial in $\Set$, with the empty function $\emptyset \to A$ being the unique morphism to any object $A$. On the other hand, any singleton set $\{a\}$ is terminal in $\Set$, with the constant function being the unique morphism to the singleton. Since the initial and terminal objects are not isomorphic, $\Set$ does not contain a zero object. In contrast, $\Grp$ and $\Ab$ both have the trivial group as a zero object.
\end{example}

We can consider diagrams that are initial/terminal with respect to other diagrams. Fix $A,B \in \Obj(\C)$. Suppose that
\begin{equation}\label{prod}\begin{tikzcd}[row sep=14 pt]
&A\\
C \arrow[ur,"\varphi"] \arrow[dr,"\psi"'] \\
&B
\end{tikzcd}\end{equation}
is such that for every other diagram
\[\begin{tikzcd}[row sep=14 pt]
&A\\
C' \arrow[ur,"\varphi'"] \arrow[dr,"\psi'"'] \\
&B
\end{tikzcd}\]
we have a unique $h\colon C' \to C$ such that the following diagram commutes:
\[\begin{tikzcd}[row sep=14 pt]
&&A \\
C' \arrow[urr,bend left,"\varphi'"] \arrow[r,dashrightarrow,"\exists! h"] \arrow[drr,bend right,"\psi'"'] &C \arrow[ur,"\varphi"] \arrow[dr,"\psi"'] \\
&&B
\end{tikzcd}\]
We can say that the diagram in (\ref{prod}) is terminal in the category of diagrams of its type, and in this particular case we usually refer to this diagram as a product. We can define the dual notion of coproducts in a similar way via initial diagrams instead.

(Co)products are both examples of a standard construction in category theory known as a universal property. Simply put, a diagram is said to satisfy a \textit{universal property} if it is universal (initial/terminal) with respect to other diagrams of the same type.

\begin{definition}\label{(co)pro}
Fix two objects $A$ and $B$ of a category $\C$. The \textit{product} of $A$ and $B$ is a triple $(A \times B,\rho_A,\rho_B)$, where $A \times B \in \Obj(\C)$, $\rho_A\colon A \times B \to A$ and $\rho_B\colon A \times B \to B$, such that for every triple $(C,\varphi,\psi)$ of the same type, there exists a unique morphism $h\colon C \to A \times B$ making the following diagram commute:
\begin{equation}\label{pro}\begin{tikzcd}[row sep=14 pt]
&&A \\
C \arrow[urr,bend left,"\varphi"] \arrow[r,dashrightarrow,"\exists! h"] \arrow[drr,bend right,"\psi"'] &A \times B \arrow[ur,"\rho_A"] \arrow[dr,"\rho_B"'] \\
&&B
\end{tikzcd}\end{equation}
The \textit{coproduct} of $A$ and $B$ is a triple $(A \amalg B,\iota_A,\iota_B)$, where $A \amalg B \in \Obj(\C)$, $\iota_A\colon A \to A \amalg B$ and $\iota_B\colon B \to A \amalg B$, such that for every triple $(C,\varphi,\psi)$ of the same type, there exists a unique morphism $h\colon A \amalg B \to C$ making the following diagram commute:
\begin{equation}\label{copro}\begin{tikzcd}[row sep=14 pt]
&&A \arrow[dll,bend right,"\varphi"']\arrow[dl,"\iota_A"'] \\
C &A \amalg B \arrow[l,dashrightarrow,"\exists! h"'] \\
&&B \arrow[ull,bend left,"\psi"] \arrow[ul,"\iota_B"]
\end{tikzcd}\end{equation}
\end{definition}

Products and coproducts are dual concepts, as seen in how (\ref{pro}) and (\ref{copro}) are both the "reversal" of the other. Like other universal properties we will encounter later, (co)products do not necessarily exist, but they are unique up to unique isomorphism. That is, given two products (resp. coproducts) $(C,\varphi,\psi)$ and $(C',\varphi',\psi')$, there exists a unique isomorphism $f\colon C \to C'$ such that $\varphi=\varphi'f$ (resp. $\varphi'=f\varphi$) and $\psi=\psi'f$ (resp. $\psi'=f\psi$). This justifies us using the term \textit{the} (co)product.

\begin{example}
Given sets $A$ and $B$, the Cartesian product $A \times B$ along with its projection maps $A \times B \to A$ and $A \times B \to B$ form the product of $A$ and $B$ in $\Set$. On the other hand, the disjoint union $A \amalg B$ along with its natural inclusion maps $A \to A \amalg B$ and $B \to A \amalg B$ form the coproduct of $A$ and $B$. In $\Ab$, the direct sum $A \oplus B$ of two abelian groups $A$ and $B$ is both the product and coproduct of $A$ and $B$ (when equipped with natural projections and inclusions, respectively).
\end{example}

\section{Abelian categories}

An abelian category is a category with some added structure on each of its hom-sets and satisfies a set of axioms. One can look at $\Ab$ as the prototypical example of abelian categories. We first provide the definition of additive categories as a stepping stone to define kernels and their dual, which are in turn used to introduce abelian categories.

\begin{definition}
A category $\A$ is \textit{additive} if:
\begin{enumerate}
		\item Every hom-set $\Hom(A,B)$ is equipped with a binary operation $+=+_{AB}$ such that it becomes an abelian group whose identity $0=0_{AB}$ is called the \textit{zero morphism};
		\item The distributive laws hold, that is,
        \[\alpha(f+g)=\alpha f+\alpha g \quad \text{and} \quad (f+g)\beta=f\beta+g\beta\]
        for all appropriate morphisms $\alpha,\beta,f,g$;
		\item $\A$ contains a zero object $0$ along with all products and coproducts (i.e., $A \times B$ and $A \amalg B$ exist for all $A,B \in \Obj(\A)$).
	\end{enumerate}
\end{definition}

As is standard, we drop the index of $0_{AB}$ and simply write $0$, which should not be confused with the zero object. It easily follows from the distributive laws that $f0=0$ and $0g=0$ for all morphisms $f$ and $g$. It is important to note that $\Aop$ is also additive if $\A$ is, with $+_{\Aop}$ defined the same way as $+_\A$. Hence, we say that additive categories are \textit{self-dual}. This property allows the dual of a statement that is true in the theory of additive categories to also be true in the theory.

\begin{proposition}\label{add-mono-epi}
Let $\A$ be an additive category and $f\colon A\to B$ a morphism. Then:
\begin{enumerate}[{\normalfont(i)}]
		\item $f$ is monic if and only if for every morphism $\alpha$ such that $f\alpha=0$, we have $\alpha=0$;
		\item $f$ is epic if and only if for every morphism $\beta$ such that $\beta f=0$, we have $\beta=0$.
\end{enumerate}
\end{proposition}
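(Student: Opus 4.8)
The plan is to prove part (i) directly from the definitions and the abelian-group structure on hom-sets, then obtain part (ii) for free by invoking the self-duality of additive categories. The key observation is that in an additive category the equation $f\alpha' = f\alpha''$ can be rewritten as $f(\alpha' - \alpha'') = 0$, which turns the two-morphism cancellation condition in the definition of a monomorphism into the single-morphism condition stated in the proposition.

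For the forward direction of (i), I would suppose $f$ is monic and take any $\alpha$ with $f\alpha = 0$. Since $f0 = 0$ as well (a consequence of the distributive laws noted just before the proposition), we have $f\alpha = f0$, and monicity gives $\alpha = 0$. For the converse, I would assume that $f\alpha = 0$ forces $\alpha = 0$, and take an arbitrary pair $\alpha', \alpha''$ with $f\alpha' = f\alpha''$. Here one should note that $\alpha'$ and $\alpha''$ necessarily share a common source $X$ and common target $A$, so $\alpha' - \alpha'' \in \Hom(X, A)$ is defined; applying the distributive law $f(\alpha' - \alpha'') = f\alpha' - f\alpha'' = 0$, the hypothesis yields $\alpha' - \alpha'' = 0$, hence $\alpha' = \alpha''$, so $f$ is monic.

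For part (ii), rather than repeating the argument with compositions on the other side, I would remark that a morphism $f$ is epic in $\A$ precisely when it is monic in $\Aop$, that $\Aop$ is again additive with the same additive structure on hom-sets, and that the condition "$\beta f = 0 \implies \beta = 0$" in $\A$ is exactly the condition "$f\beta = 0 \implies \beta = 0$" read in $\Aop$. Thus (ii) is the statement (i) applied to $\Aop$, and no further computation is needed.

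I do not anticipate a genuine obstacle here; the only point requiring a little care is the bookkeeping of sources and targets, namely checking that the differences $\alpha' - \alpha''$ (and dually $\beta' - \beta''$) live in a single hom-set so that the abelian-group operation and the distributive laws actually apply. Everything else is a one-line manipulation, and the epic half should be dispatched purely by the self-duality remark rather than by a second hands-on argument.
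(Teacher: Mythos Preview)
Your proposal is correct and matches the paper's proof essentially line for line: the paper proves (i) by the same two-step argument (cancel $f$ from $f\alpha=f0$ for the forward direction, and use $f(\alpha'-\alpha'')=0$ for the converse), and then obtains (ii) by exactly the duality argument you describe, passing to $\Aop$. The only cosmetic difference is that the paper also remarks that (ii) could alternatively be proved by mirroring the argument of (i) directly.
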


\begin{proof}
(i) Suppose that $f$ is monic and that $f\alpha=0$. Since $f\alpha=0=f0$, we obtain $\alpha=0$ by left-canceling $f$. Conversely, suppose that $f\alpha=0$ implies $\alpha=0$. If $\alpha',\alpha''$ are such that $f\alpha'=f\alpha''$, we clearly have $f(\alpha'-\alpha'')=0$, thus $\alpha'-\alpha''=0$ by hypothesis. Hence, $\alpha'=\alpha''$, which proves that $f$ is monic.\\ \\
(ii) This is the dual statement of (i) and can be similarly proved by mirroring the previous proof. Alternatively, the proposition follows from the following string of equivalences:
\begin{align*}
\text{$f$ is epic in $\A$} &\iff \text{$f$ is monic in $\Aop$}\\
&\iff \text{$f\circ_{\Aop} \alpha=0$ implies $\alpha=0$}\\
&\iff \text{$\beta\circ_\A f=0$ implies $\beta=0$}.
\end{align*}
The preceding argument switches the focus around between $\A$ and $\Aop$ in order to prove (ii) from (i).
\end{proof}

Proposition \ref{add-mono-epi} presents a pair of \textit{dual statements}, which are similar-looking propositions where (i) talks about a certain categorical concept and (ii) talks about its dual concept. Proposition \ref{add-mono-epi}(ii) can be seen as a "reversed" version of (i) which talks about epimorphisms instead of monomorphisms. A proof of a dual statement can be mirrored from the proof of the original statement or obtained by a clever use of the opposite category. We will encounter more dual statements in this section after we define abelian categories.

\begin{definition}
Let $f\colon A \to B$ be a morphism of an additive category $\A$. The \textit{kernel} of $f$ is the morphism $\ker f\colon K_f \to A$ such that $f(\ker f)=0$, and for every morphism $g\colon C \to A$ such that $fg=0$, there exists a unique $h\colon C \to K_f$ making the following diagram commute:
\[\begin{tikzcd}[row sep=14 pt]
&&A \arrow[dd,"f"] \\
C \arrow[urr,bend left,"g"] \arrow[r,dashrightarrow,"\exists! h"] \arrow[drr,bend right,"0"'] &K_f \arrow[ur,"\ker f"] \arrow[dr,"0"'] \\
&&B
\end{tikzcd}\]
Dually, the \textit{cokernel} of $f$ is the morphism $\cok f\colon B \to C_f$ such that $(\cok f)f=0$, and for every morphism $g\colon B \to C$ such that $gf=0$, there exists a unique $h\colon C_f \to C$ making the following diagram commute:
\[\begin{tikzcd}[row sep=14 pt]
&&A \arrow[dd,"f"]\arrow[dll,bend right,"0"']\arrow[dl,"0"'] \\
C &C_f \arrow[l,dashrightarrow,"\exists! h"'] \\
&&B \arrow[ull,bend left,"g"] \arrow[ul,"\cok f"]
\end{tikzcd}\]
\end{definition}

\begin{remark}
We write (co)kernels in the sense of category theory in all lowercase letters (i.e., $\ker$ and $\cok$) instead of writing the first letter in uppercase, which we do when we intend for them to be a set (e.g. $\Ker f=\{x \in A:f(x)=0\}$).
\end{remark}

\begin{definition}\label{abelian}
An additive category $\A$ is \textit{abelian} if:
\begin{enumerate}
		\item $\A$ contains all kernels and cokernels (i.e., $\ker f$ and $\cok f$ exist for every morphism $f$);
		\item Every monomorphism is a kernel of some morphism, and every epimorphism is a cokernel of some morphism.
\end{enumerate}
\end{definition}

Abelian categories provide a nice setting to work with since we can always assume the existence of a zero object, (co)products and (co)kernels. Furthermore, the second axiom of Definition \ref{abelian} is motivated by the fact that subgroups of an abelian group are normal. Like additive categories, abelian categories are self-dual since the opposite of an abelian category is also abelian.

\begin{example}
By defining the sum of two homomorphisms $f,g \in \Hom(A,B)$ as $(f+g)(a)=f(a)+g(a)$, it can be shown that $\Ab$ (and $\RMod$ in general) are abelian. Given a morphism $f\colon A \to B$, the morphism which satisfies the universal property of kernels is the inclusion map $\Ker f \to A$. On the other hand, given $\Cok f=B/\Img f$, the cokernel of $f$ in the sense of category theory is the natural map $B \to \Cok f$ sending each element of $B$ to its corresponding equivalence class.
\end{example}

To provide examples of diagrammatic arguments, we prove a couple of elementary results around (co)kernels. Note that while the following propositions are almost trivial for $\Ab$, we need to rely on diagrams obtained from relevant universal properties when working in the more general setting.

\begin{proposition}
Let $f\colon A\to B$ be a morphism of an abelian category. Then $\ker f$ is monic and $\cok f$ is epic.
\end{proposition}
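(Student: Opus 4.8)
The plan is to extract monicity of $\ker f$ directly from the uniqueness clause in its universal property, and then obtain epicness of $\cok f$ for free by duality. The core observation is that whenever a morphism $m\colon K \to A$ has the property that every $g\colon C \to A$ factoring appropriately through it does so \emph{uniquely}, that uniqueness is exactly a cancellation statement.

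Concretely, for the first half I would invoke Proposition \ref{add-mono-epi}(i): it suffices to show that if $\alpha\colon C \to K_f$ satisfies $(\ker f)\alpha = 0$, then $\alpha = 0$. Given such an $\alpha$, I would observe that the zero morphism $0\colon C \to A$ satisfies $f \circ 0 = 0$, so the universal property of $\ker f$ supplies a \emph{unique} $h\colon C \to K_f$ with $(\ker f)h = 0$. Since both $\alpha$ and the zero morphism fit this description, uniqueness forces $\alpha = 0$. Hence $\ker f$ is monic. (If one prefers not to route through Proposition \ref{add-mono-epi}, the same argument works with an arbitrary pair $\alpha', \alpha''$ satisfying $(\ker f)\alpha' = (\ker f)\alpha''$: set $g = (\ker f)\alpha'$, note $fg = 0$, and conclude $\alpha' = \alpha''$ by uniqueness of the factorization of $g$ through $\ker f$.)

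For the second half, I would point out that $\cok f$ in $\A$ is precisely $\ker f$ considered in the opposite category $\Aop$, that $\Aop$ is again additive (by self-duality of additive categories), and that ``monic in $\Aop$'' means ``epic in $\A$''. So the statement that $\cok f$ is epic is literally the first half applied in $\Aop$, and no new work is needed. Alternatively, for readers who want it spelled out, I would mirror the argument: if $\beta\colon C_f \to C$ satisfies $\beta(\cok f) = 0$, then $0\colon B \to C$ satisfies $0 \cdot f = 0$, so the universal property of $\cok f$ gives a unique $h\colon C_f \to C$ with $h(\cok f) = 0$; both $\beta$ and $0$ qualify, hence $\beta = 0$, and Proposition \ref{add-mono-epi}(ii) gives that $\cok f$ is epic.

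I do not anticipate a genuine obstacle here; the only point requiring a moment's care is making sure to use the \emph{uniqueness} part of the universal property rather than merely the existence of the factorization — existence alone says nothing about cancellation. Everything else is bookkeeping with zero morphisms and the distributive/absorption identities $f0 = 0$ and $0g = 0$ already noted after the definition of additive categories.
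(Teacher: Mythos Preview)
Your proposal is correct and matches the paper's approach essentially line for line: both take $\alpha$ with $(\ker f)\alpha = 0$, invoke the universal property of $\ker f$ on the zero morphism $C \to A$, and use uniqueness of the factorization to force $\alpha = 0$, with the cokernel half handled by duality. The only cosmetic difference is that the paper writes the map being factored as $(\ker f)\alpha$ (which equals $0$) rather than starting from $0$ directly.
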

\begin{proof}
We only prove that $\ker f$ is monic since the other statement easily follows by duality. Suppose $\alpha\colon C \to K_f$ is such that $(\ker f)\alpha=0$. We want to show that $\alpha=0$. Since $f(\ker f)\alpha=0\alpha=0$, we have the following commutative diagram by invoking the universal property of the kernel:
\[\begin{tikzcd}[row sep=14 pt]
&&A \arrow[dd,"f"] \\
C \arrow[urr,bend left,"(\ker f)\alpha=0"] \arrow[r,dashrightarrow,"\exists! h"] \arrow[drr,bend right,"0"'] &K_f \arrow[ur,"\ker f"] \arrow[dr,"0"'] \\
&&B
\end{tikzcd}\]
Observe that $h=0$ and $h=\alpha$ both make the diagram commute. Hence, by the uniqueness of $h$, we have $\alpha=0$.
\end{proof}

\begin{proposition}\label{kereq0}
Let $f\colon A\to B$ be a morphism of an abelian category. Then:
\begin{enumerate}[{\normalfont(i)}]
		\item $f$ is monic if and only if $\ker f=0$.
		\item $f$ is epic if and only if $\cok f=0$.
\end{enumerate}
\end{proposition}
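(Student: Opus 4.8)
The plan is to derive both parts directly from Proposition \ref{add-mono-epi}, which recasts ``monic'' and ``epic'' in an additive category as the vanishing of one-sided composites with the zero morphism. First I would fix the reading of the hypothesis: ``$\ker f = 0$'' means that the morphism $\ker f\colon K_f \to A$ is the zero morphism (equivalently, that $K_f$ is a zero object; a one-line remark using the universal property shows $\id_{K_f} = 0$ in that case, so the two readings agree).

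For part (i), the forward direction is immediate: the defining property of the kernel gives $f(\ker f) = 0$, so if $f$ is monic then Proposition \ref{add-mono-epi}(i) forces $\ker f = 0$. For the converse, suppose $\ker f = 0$ and let $\alpha$ be any morphism with $f\alpha = 0$. The universal property of the kernel yields $h$ with $(\ker f)h = \alpha$; since $\ker f = 0$, this gives $\alpha = 0$, and Proposition \ref{add-mono-epi}(i) then says $f$ is monic. Part (ii) is the dual statement, so rather than repeat the argument I would invoke self-duality: an epimorphism in $\A$ is a monomorphism in $\Aop$, the cokernel of $f$ in $\A$ is the kernel of $f$ in $\Aop$, and $\Aop$ is again abelian, so (ii) is precisely (i) applied in $\Aop$. (Alternatively, one mirrors the proof of (i) using Proposition \ref{add-mono-epi}(ii) and the universal property of the cokernel.)

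I do not expect a genuine obstacle here; the proof is short. The only points requiring care are the interpretation of ``$\ker f = 0$'' noted above and keeping the composition order straight when applying the universal property — namely that the mediating morphism $h$ is \emph{post}-composed with $\ker f$, so that $\ker f = 0$ really does kill $\alpha$. Everything else is bookkeeping, and the duality step disposes of (ii) with no extra work.
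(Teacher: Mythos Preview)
Your proposal is correct and matches the paper's proof essentially step for step: both directions of (i) use $f(\ker f)=0$ together with the universal property of the kernel (the paper phrases the forward direction as ``left-canceling $f$'' rather than citing Proposition~\ref{add-mono-epi}, but that is the same move), and (ii) is dispatched by duality. Your extra remark clarifying the meaning of ``$\ker f = 0$'' is a helpful addition not made explicit in the paper.
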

\begin{proof}
We only prove (i). If $f$ is monic, then $\ker f=0$ is obtained by left-canceling $f$ from the equation $f(\ker f)=0$. Now suppose that $\ker f=0$ and $\alpha\colon C \to A$ is such that $f\alpha=0$. We want to prove that $\alpha=0$. The following diagram commutes:
\[\begin{tikzcd}[row sep=14 pt]
&&A \arrow[dd,"f"] \\
C \arrow[urr,bend left,"\alpha"] \arrow[r,dashrightarrow,"\exists! h"] \arrow[drr,bend right,"0"'] &K_f \arrow[ur,"\ker f"] \arrow[dr,"0"'] \\
&&B
\end{tikzcd}\]
We thus have $\alpha=(\ker f)h=0h=0$, and so $f$ is monic.
\end{proof}

We further state a few important properties of abelian categories without proof. While the proofs are available in \cite{aluffi}, one would gain more experience by proving these as an exercise in Section \ref{exer}. 

\begin{proposition}\label{kercok}{\normalfont\cite[p. 565]{aluffi}}
Let $f\colon A \to B$ be a morphism in an abelian category. If $f$ is monic, then $f$ is the kernel of $\cok f$.
\end{proposition}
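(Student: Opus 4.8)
The plan is to exploit the second abelian-category axiom, which guarantees that the monomorphism $f$ is the kernel of \emph{some} morphism, and then to bootstrap this into the statement that $f$ is the kernel of the specific morphism $\cok f$. So I would start by writing $f = \ker g$ for some $g \colon B \to D$, so that in particular $gf = 0$.

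Next I would build comparison maps in both directions. Because $gf = 0$ and $\cok f \colon B \to C_f$ is the cokernel of $f$, its universal property supplies a unique morphism $h \colon C_f \to D$ with $g = h \, (\cok f)$. In the other direction, $(\cok f) f = 0$ holds by definition of the cokernel, so the universal property of $\ker(\cok f) \colon K \to B$ supplies a unique $u \colon A \to K$ with $f = (\ker(\cok f)) \, u$. Finally, from $g \, (\ker(\cok f)) = h \, (\cok f) \, (\ker(\cok f)) = h \cdot 0 = 0$ and the fact that $f = \ker g$, the universal property of $\ker g$ supplies a unique $v \colon K \to A$ with $\ker(\cok f) = f v$.

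It then remains to check that $u$ and $v$ are mutually inverse. From $f = (\ker(\cok f)) \, u = (f v) u = f (v u)$ and the fact that $f$ is monic we cancel to get $v u = \id_A$; from $\ker(\cok f) = f v = (\ker(\cok f)) \, u v$ and the fact that $\ker(\cok f)$ is monic (kernels are always monic in an abelian category, as shown above) we cancel to get $u v = \id_K$. Hence $u$ is an isomorphism. Since $f = (\ker(\cok f)) \, u$ with $u$ an isomorphism, and precomposing a kernel with an isomorphism again yields a kernel of the same morphism — which one verifies in one line from the universal property, or deduces from uniqueness up to unique isomorphism — we conclude that $f$ is a kernel of $\cok f$.

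I expect the only genuinely non-mechanical step to be the first one: recognizing that the abelian axiom is exactly what is needed to get $f$ to be \emph{any} kernel at all, and that the morphism to feed into the cokernel's universal property should be an arbitrary witness $g$ with $f = \ker g$. Once $g$ and the factorization $g = h\,(\cok f)$ are in hand, the rest is a routine chase of universal properties.
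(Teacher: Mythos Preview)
Your proof is correct and follows precisely the approach the paper indicates: the proof is omitted in the text, but the accompanying exercise gives the hint ``use the fact that $f=\ker\varphi$ for some morphism $\varphi$,'' which is exactly your first and key step. The remainder of your argument---factoring $g$ through $\cok f$, building the comparison maps $u,v$ via the two kernel universal properties, and cancelling by monicity to see they are mutual inverses---is the standard execution of that hint, with no gaps.
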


\begin{proposition}\label{isomonoepi}{\normalfont\cite[p. 566]{aluffi}}
A morphism $f$ of an abelian category that is both a monomorphism and an epimorphism is an isomorphism.
\end{proposition}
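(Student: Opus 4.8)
The plan is to combine Propositions \ref{kereq0} and \ref{kercok} to realize $f$ as the kernel of a zero morphism, and then to extract a two-sided inverse directly from the universal property of that kernel, without any recourse to elements or to the Freyd--Mitchell embedding.

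First I would set up the reduction. Suppose $f\colon A \to B$ is both monic and epic. Since $f$ is epic, Proposition \ref{kereq0}(ii) gives $\cok f = 0$; that is, the cokernel morphism $\cok f\colon B \to C_f$ is the zero morphism $0\colon B \to C_f$. Since $f$ is monic, Proposition \ref{kercok} says that $f$ is a kernel of $\cok f$, hence a kernel of $0\colon B \to C_f$. Written out, this universal property states: for every morphism $g\colon C \to B$ satisfying $0 \cdot g = 0$ --- a condition that holds automatically --- there is a unique $h\colon C \to A$ with $fh = g$.

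The next step is the crux: apply this universal property to the morphism $\id_B\colon B \to B$, which trivially satisfies $0 \cdot \id_B = 0$. This produces a morphism $h\colon B \to A$ with $fh = \id_B$, so $h$ is a right inverse of $f$. To see that $h$ is also a left inverse, I would compute $f(hf) = (fh)f = \id_B \cdot f = f = f \cdot \id_A$, and then left-cancel $f$ (which is legitimate since $f$ is monic) to conclude $hf = \id_A$. Thus $h$ is a two-sided inverse of $f$, and $f$ is an isomorphism.

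The conceptual heart of the argument --- and the step I expect a reader to find least obvious --- is recognizing that feeding $\id_B$ into the kernel universal property is precisely what manufactures the inverse; once that observation is in place, everything else is a one-line cancellation using that $f$ is monic. The only other point requiring a little care is the bookkeeping of which ``zero'' is meant (the morphism $\cok f$ as an element of $\Hom(B, C_f)$, not a zero object), but this is exactly what Proposition \ref{kereq0}(ii) pins down.
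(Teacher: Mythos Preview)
Your proof is correct and follows essentially the same strategy as the paper's hint in Exercise~\ref{exer} for this proposition: use Proposition~\ref{kercok} together with the vanishing of $\cok f$ (from Proposition~\ref{kereq0}) to realize $f$ as a kernel of a zero morphism, then feed $\id_B$ into the universal property to obtain a right inverse. The only minor difference is that the hint suggests invoking the dual of Proposition~\ref{kercok} separately to produce a left inverse (after which the two one-sided inverses are seen to coincide), whereas you obtain the left-inverse identity more economically by a single cancellation using that $f$ is monic; this shortcut is entirely legitimate and arguably cleaner.
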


\begin{proposition}\label{sum}{\normalfont\cite[p. 568]{aluffi}}
In an abelian category $\A$, products and coproducts coincide. That is, $A \times B \cong A \amalg B$ for all objects $A,B \in \Obj(\A)$.
\end{proposition}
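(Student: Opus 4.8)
The plan is to exploit the additive structure directly: in any additive category the binary product and binary coproduct of two objects are canonically isomorphic, so the full abelian hypothesis is not actually needed. Fix objects $A$ and $B$, let $(A \times B, \rho_A, \rho_B)$ be their product, and build morphisms that will serve as coproduct inclusions. Using the universal property of the product, I would define $s_A \colon A \to A \times B$ as the unique morphism with $\rho_A s_A = \id_A$ and $\rho_B s_A = 0$, and symmetrically $s_B \colon B \to A \times B$ with $\rho_A s_B = 0$ and $\rho_B s_B = \id_B$.

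The crucial intermediate step is the identity $s_A \rho_A + s_B \rho_B = \id_{A \times B}$. To prove it, I would compute $\rho_A(s_A \rho_A + s_B \rho_B)$ and $\rho_B(s_A \rho_A + s_B \rho_B)$ using the distributive laws of the additive structure; these simplify to $\rho_A$ and $\rho_B$ respectively, which are exactly the equations satisfied by $\id_{A \times B}$. The uniqueness clause of the product's universal property, applied to the test triple $(A \times B, \rho_A, \rho_B)$, then forces $s_A \rho_A + s_B \rho_B = \id_{A \times B}$. I expect this to be the main obstacle, or at least the step most easily overlooked, since it is the one place where one must actively invoke uniqueness rather than existence, and where the additive (not merely categorical) structure is essential.

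With this in hand, I would verify that $(A \times B, s_A, s_B)$ is a coproduct of $A$ and $B$. Given any triple $(C, \varphi, \psi)$ with $\varphi \colon A \to C$ and $\psi \colon B \to C$, set $h = \varphi \rho_A + \psi \rho_B \colon A \times B \to C$. The distributive laws together with the defining equations for $s_A$ and $s_B$ give $h s_A = \varphi$ and $h s_B = \psi$. For uniqueness, suppose $h'$ also satisfies $h' s_A = \varphi$ and $h' s_B = \psi$; then
\[
h' = h'(s_A \rho_A + s_B \rho_B) = (h' s_A)\rho_A + (h' s_B)\rho_B = \varphi \rho_A + \psi \rho_B = h,
\]
where the first equality uses the identity established above. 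Hence $(A \times B, s_A, s_B)$ satisfies the universal property of the coproduct, and by the uniqueness of coproducts up to (unique) isomorphism we conclude $A \times B \cong A \amalg B$.
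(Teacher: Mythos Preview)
Your argument is correct and complete. It differs from the route the paper suggests in its exercise hint, which is to build the natural map $A \amalg B \to A \times B$ (induced by the matrix with identities on the diagonal), show it is both monic and epic, and then invoke Proposition~\ref{isomonoepi} to conclude it is an isomorphism. Your approach instead shows directly that the product object $A \times B$, equipped with the sections $s_A,s_B$, already satisfies the universal property of the coproduct; you never need Proposition~\ref{isomonoepi}, and in fact never use the abelian axioms beyond the additive structure, so your proof works in any additive category. The paper's intended route is more in keeping with the surrounding exposition (it exercises the freshly proved fact that monic plus epic implies invertible in an abelian category), while your route is more elementary and slightly more general. Both ultimately rest on the same computation---the identity $s_A\rho_A + s_B\rho_B = \id_{A\times B}$ is exactly what one uses to show the natural comparison map is monic and epic in the paper's approach.
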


Following Proposition \ref{sum}, we can refer to both products and coproducts in abelian categories as the \textit{direct sum} $A \oplus B$.

\section{The generalized short five lemma}\label{gsfl}

To state the short five lemma for abelian categories, we first provide a more general definition for short exact sequences.

\begin{definition}\label{ext2}
We say that the following sequence in an abelian category $\A$:
\begin{equation}\label{seq4}\begin{tikzcd}
0 \arrow[r] & A \arrow[r, "f"] & B \arrow[r,"g"] & C \arrow[r] & 0
\end{tikzcd}\end{equation}
is \textit{exact} if $f$ is monic, $g$ is epic, $gf=0$ and $(\cok f)(\ker g)=0$. We then call the sequence a \textit{short exact sequence}.
\end{definition}

\begin{remark}
The symbol $0$ in (\ref{seq4}) represents the zero object in $\A$, which renders the morphisms $0 \to A$ and $C \to 0$ unambiguous. This coincides with our previous notation of $0$ denoting the trivial group, as it is indeed the zero object in $\Ab$. In fact, the conditions $gf=0$ and $(\cok f)(\ker g)=0$ in Definition \ref{ext2} precisely recovers the exactness property $\Img f=\Ker g$ in Definition \ref{ext1} for $\A=\Ab$.
\end{remark}

With some thought, Definition \ref{ext2} can be easily generalized to define exact sequences that are not necessarily short. But for short exact sequences in particular, we prefer a more compact definition as follows:

\begin{proposition}\label{ext3}
The sequence in {\normalfont(\ref{seq4})} is exact if and only if $f=\ker g$ and $g=\cok f$.
\end{proposition}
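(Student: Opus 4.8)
The plan is to prove both implications separately, making repeated use of the definition of exactness in Definition~\ref{ext2} together with the universal properties of $\ker$ and $\cok$ and the basic facts recorded in Propositions~\ref{kereq0} and~\ref{kercok}.

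\textbf{($\Leftarrow$)} Suppose $f = \ker g$ and $g = \cok f$. Then $f$ is monic and $g$ is epic since kernels are monic and cokernels are epic. The relation $gf = 0$ holds because $f = \ker g$ satisfies $g(\ker g) = 0$ by definition. It remains to check $(\cok f)(\ker g) = 0$; but $\ker g = f$, so $(\cok f)(\ker g) = (\cok f) f = 0$ by the defining property of $\cok f$. Hence the sequence is exact.

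\textbf{($\Rightarrow$)} Suppose the sequence in (\ref{seq4}) is exact, so $f$ is monic, $g$ is epic, $gf = 0$, and $(\cok f)(\ker g) = 0$. The first step is to show $f = \ker g$. Since $gf = 0$, the universal property of $\ker g$ gives a unique $u$ with $f = (\ker g)\,u$. For the reverse direction, I would use $(\cok f)(\ker g) = 0$: by the universal property of $\cok f$, this factors $\ker g$ through $\ker(\cok f)$, and since $f$ is monic, Proposition~\ref{kercok} identifies $f$ with $\ker(\cok f)$; chasing the resulting factorizations shows $\ker g = f\,v$ for some $v$. Then $u$ and $v$ are mutually inverse (using that $f$ and $\ker g$ are monic to cancel), so $f$ and $\ker g$ differ by an isomorphism, i.e.\ $f = \ker g$ up to the usual identification. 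The second step, $g = \cok f$, is the dual statement and I would deduce it by applying the first step in the opposite category $\Aop$, where the sequence reverses to an exact sequence $0 \to C \to B \to A \to 0$ with $g$ playing the role of ``$f$''; abelian categories being self-dual, the already-proved implication yields $g = \ker(\text{the map } 0 \leftarrow C)$ in $\Aop$, which translates back to $g = \cok f$ in $\A$.

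The main obstacle is the $(\Rightarrow)$ direction's first step: producing the factorization of $\ker g$ through $f$ from the condition $(\cok f)(\ker g) = 0$, and then verifying the two factorizations compose to identities. This requires carefully threading together the universal property of $\cok f$, the identification $f = \ker(\cok f)$ from Proposition~\ref{kercok}, and cancellation of monics; everything else is either immediate from definitions or handled cleanly by the duality argument.
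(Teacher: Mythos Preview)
Your proof is correct and follows essentially the same route as the paper: the easy ($\Leftarrow$) direction unpacks the four conditions directly, and for ($\Rightarrow$) you produce mutually inverse factorizations $f=(\ker g)u$ and $\ker g=fv$ via the universal properties of $\ker g$ and of $f=\ker(\cok f)$ (Proposition~\ref{kercok}), then invoke duality for $g=\cok f$. One small slip in your duality paragraph: in $\Aop$ the first step gives $g=\ker_{\Aop} f$ (not ``$\ker$ of the map $0\leftarrow C$''), which is what dualizes to $g=\cok f$ in $\A$.
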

\begin{proof}
If $f=\ker g$ and $g=\cok f$, it is easy to see that the four conditions in Definition \ref{ext2} all hold. Conversely, we just need to show that $f=\ker g$, for $g=\cok f$ follows by duality. From $gf=0$, we have a commutative diagram as follows:
\[\begin{tikzcd}[row sep=14 pt]
&&B \arrow[dd,"g"] \\
A \arrow[urr,bend left,"f"] \arrow[r,dashrightarrow,"\exists! \varphi"] \arrow[drr,bend right,"0"'] &K_g \arrow[ur,"\ker g"] \arrow[dr,"0"'] \\
&&C
\end{tikzcd}\]
In addition, since $(\cok f)(\ker g)=0$ and $f$ is monic, an application of Proposition \ref{kercok} yields the following commutative diagram:
\[\begin{tikzcd}[row sep=14 pt]
&&B \arrow[dd,"\cok f"] \\
K_g \arrow[urr,bend left,"\ker g"] \arrow[r,dashrightarrow,"\exists! \psi"] \arrow[drr,bend right,"0"'] &A \arrow[ur,"f"] \arrow[dr,"0"'] \\
&&C_f
\end{tikzcd}\]
Both $\varphi$ and $\psi$ are easily shown to be inverses of each other. Hence, $\varphi$ is an isomorphism. This proves that $f=\ker g$.
\end{proof}

We are now ready to state and prove the short five lemma. The following proof is due to Mac Lane \cite{maclane-categories,maclane-homology}.

\begin{theorem}\label{sfl2}
Let
\begin{equation}\label{seq5}\begin{tikzcd}
0 \arrow[r] & A \arrow[r, "f"] \arrow[d, "\alpha"] & B \arrow[r,"g"] \arrow[d, "\beta"] & C \arrow[r] \arrow[d, "\gamma"] & 0 \\
0 \arrow[r] & A' \arrow[r, "f'"] & B' \arrow[r,"g'"] & C' \arrow[r] & 0
\end{tikzcd}\end{equation}
be a commutative diagram with exact rows in an abelian category $\A$. We have the following:
\begin{enumerate}[{\normalfont(i)}]
    \item If $\alpha$ and $\gamma$ are monic, then $\beta$ is monic;
    \item If $\alpha$ and $\gamma$ are epic, then $\beta$ is epic;
    \item If $\alpha$ and $\gamma$ are isomorphisms, then $\beta$ is an isomorphism.
\end{enumerate}
\end{theorem}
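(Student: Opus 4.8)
The plan is to replace the element-chase of Theorem~\ref{sfl1} by a chase with \emph{arbitrary test morphisms}, after two reformulations that make the relevant universal properties available. By Proposition~\ref{ext3}, the exactness of the rows of~(\ref{seq5}) means exactly that $f=\ker g$, $g=\cok f$, $f'=\ker g'$ and $g'=\cok f'$. By Proposition~\ref{add-mono-epi}, $\beta$ is monic precisely when $\beta t=0$ implies $t=0$ for every $t\colon T\to B$, and $\beta$ is epic precisely when $u\beta=0$ implies $u=0$ for every $u\colon B'\to U$. Granting (i) and (ii), part (iii) is immediate: $\beta$ is then both monic and epic, hence an isomorphism by Proposition~\ref{isomonoepi}.

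For (i), suppose $\alpha$ and $\gamma$ are monic and fix $t\colon T\to B$ with $\beta t=0$. Composing on the left with $g'$ and using $g'\beta=\gamma g$ gives $\gamma(gt)=g'\beta t=0$, so $gt=0$ because $\gamma$ is monic. Since $f=\ker g$, there is a unique $s\colon T\to A$ with $fs=t$. Then $f'(\alpha s)=\beta fs=\beta t=0$ by $f'\alpha=\beta f$, so $\alpha s=0$ since $f'$ is monic, and then $s=0$ since $\alpha$ is monic. Hence $t=fs=0$, and $\beta$ is monic.

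For (ii), I would argue by duality rather than rewrite the chase. Exactness of a short sequence is a self-dual property: passing to $\Aop$ interchanges $\ker$ and $\cok$, so by Proposition~\ref{ext3} the reversed diagram, whose rows are $0\to C'\xrightarrow{g'}B'\xrightarrow{f'}A'\to 0$ and $0\to C\xrightarrow{g}B\xrightarrow{f}A\to 0$ with vertical maps $\gamma,\beta,\alpha$, again has exact rows, now in $\Aop$. A morphism is epic in $\A$ iff it is monic in $\Aop$, so $\alpha,\gamma$ epic in $\A$ means $\alpha,\gamma$ monic in $\Aop$; applying~(i) inside $\Aop$ gives that $\beta$ is monic in $\Aop$, i.e.\ epic in $\A$. (Alternatively one can mirror the proof of~(i), replacing kernels and monomorphisms throughout by cokernels and epimorphisms.)

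The computations here are all single-step cancellations, so I do not expect a real obstacle; the substance is entirely in the two reformulations above. The one place that needs care is the duality step: one must check that reversing~(\ref{seq5}) genuinely yields exact rows in $\Aop$ with the vertical maps monic there, which is exactly why it matters that Proposition~\ref{ext3} phrases exactness through $\ker$ and $\cok$ --- a pair of notions that trade places under $(-)^{op}$ --- so that the characterization is visibly self-dual. The broader point this proof is meant to illustrate is that the naive diagram chase of Theorem~\ref{sfl1} transfers essentially verbatim to any abelian category once ``elements of $X$'' are reread as ``morphisms into $X$''.
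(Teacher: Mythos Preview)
Your proof is correct and follows essentially the same route as the paper's: the same commutativity/cancellation chain for (i), the same passage to $\Aop$ for (ii), and the same appeal to Proposition~\ref{isomonoepi} for (iii). The only cosmetic difference is that the paper tests against the single universal morphism $\ker\beta$ (concluding via Proposition~\ref{kereq0}) where you test against an arbitrary $t$ with $\beta t=0$ (concluding via Proposition~\ref{add-mono-epi}); the logic is otherwise identical.
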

\begin{proof}
(i) Suppose that $\alpha$ and $\gamma$ are monic. Observe that
\[\gamma g\ker\beta=g'\beta\ker\beta=g'0=0.\]
Since $\gamma$ is monic, we have $g\ker \beta=0$. We also have $f=\ker g$ via Proposition \ref{ext3}, whence we obtain a commutative diagram
\[\begin{tikzcd}[row sep=14 pt]
&&B \arrow[dd,"g"] \\
K_\beta \arrow[urr,bend left,"\ker \beta"] \arrow[r,dashrightarrow,"\exists! h"] \arrow[drr,bend right,"0"'] &A \arrow[ur,"f"] \arrow[dr,"0"'] \\
&&C
\end{tikzcd}\]
Therefore,
\[f'\alpha h=\beta fh=\beta\ker\beta=0.\]
Since $f'$ and $\alpha$ are monic, we have $h=0$. This shows that $\ker \beta=fh=0$, which implies that $\beta$ is monic by Proposition \ref{kereq0}.\\ \\
(ii) Suppose that $\alpha$ and $\gamma$ are epic in $\A$. The reversed commutative diagram of (\ref{seq5}) in the opposite category $\Aop$ is
\[\begin{tikzcd}
0 \arrow[r] & C' \arrow[r, "g'"] \arrow[d, "\gamma"] & B' \arrow[r,"f'"] \arrow[d, "\beta"] & A' \arrow[r] \arrow[d, "\alpha"] & 0 \\
0 \arrow[r] & C \arrow[r, "g"] & B \arrow[r,"f"] & A \arrow[r] & 0
\end{tikzcd}\]
This diagram has exact rows (by virtue of the self-dual property of exactness). And since $\alpha$ and $\gamma$ are monic in $\Aop$, it follows from (i) that $\beta$ is monic in $\Aop$. Hence, $\beta$ is epic in $\A$. \\ \\
(iii) This obviously follows from (i) and (ii), bearing in mind Proposition \ref{isomonoepi}.
\end{proof}

Category theory illuminates the duality present in the short five lemma previously obscured in Theorem \ref{sfl1}. While the proof of Theorem \ref{sfl2} turns out to be more involved than the standard diagram chase, we would argue that the insight gained from this enterprise is worth the cost.

\section{Another proof of the short five lemma}\label{alt}

In this section, we give an alternative proof of the short five lemma forgoing Proposition \ref{ext3} and instead appealing to Definition \ref{ext2} directly. Our proof also presents us with the opportunity of introducing the important concept of pullbacks.

\begin{definition}
Let $\varphi\colon A \to C$ and $\psi\colon B \to C$ be morphisms of a category $\C$. The \textit{pullback} of $\varphi$ and $\psi$ is the triple $(A \times_C B,\pi_A,\pi_B)$, where $A \times_C B \in \Obj(\C)$, $\pi_A\colon A \times_C B \to A$ and $\pi_B\colon A \times_C B \to B$, such that the following square commutes:
\[\begin{tikzcd}
A \times_C B \arrow[r,"\pi_B"] \arrow[d,"\pi_A"'] & B \arrow[d, "\psi"] \\
A \arrow[r,"\varphi"'] & C
\end{tikzcd}\]
and it is terminal with respect to this property, that is, for any commutative square
\[\begin{tikzcd}
D \arrow[r,"g"] \arrow[d,"f"'] & B \arrow[d, "\psi"] \\
A \arrow[r,"\varphi"'] & C
\end{tikzcd}\]
there exists a unique morphism $h\colon D \to A\times_C B$ such that the following diagram commutes:
\[\begin{tikzcd}
D \arrow[rrd,"g",bend left] \arrow[ddr,"f"',bend right] \arrow[dr,dashrightarrow,"\exists!h"]\\
& A \times_C B \arrow[r,"\pi_B"] \arrow[d,"\pi_A"'] & B \arrow[d, "\psi"] \\
& A \arrow[r,"\varphi"'] & C
\end{tikzcd}\]
Dually, let $\varphi'\colon C \to A$ and $\psi'\colon C \to B$ be morphisms of $\C$. The \textit{pushout} of $\varphi'$ and $\psi'$ is the triple $(A \amalg_C B,\mu_A,\mu_B)$, where $A \amalg_C B \in \Obj(\C)$, $\mu_A\colon A \to A \amalg_C B$ and $\mu_B\colon B \to A \amalg_C B$, such that the following square commutes and is initial:
\[\begin{tikzcd}
A \amalg_C B & B \arrow[l, "\mu_B"'] \\
A \arrow[u,"\mu_A"] & C \arrow[l,"\varphi'"] \arrow[u,"\psi'"']
\end{tikzcd}\]
\end{definition}

Note that the previous definition works for arbitrary categories. For abelian categories in particular, we have the following properties:

\begin{proposition}\label{exist}{\normalfont\cite[p. 359]{maclane-homology}}
All pullbacks and pushouts exist in an abelian category $\A$.
\end{proposition}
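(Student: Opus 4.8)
The plan is to build the pullback by hand out of data an abelian category is already guaranteed to possess---the direct sum and kernels---and then read off pushouts by duality. Let $\varphi\colon A \to C$ and $\psi\colon B \to C$ be given. Since $\A$ is additive it has the product $A \oplus B$ with projections $\rho_A,\rho_B$, and since hom-sets are abelian groups we may form the single morphism $\theta = \varphi\rho_A - \psi\rho_B\colon A \oplus B \to C$, which packages the commutativity condition of a cone over $\varphi,\psi$ into one equation. By the first axiom of Definition \ref{abelian} the morphism $k = \ker\theta\colon P \to A \oplus B$ exists, and I claim $(P,\ \rho_A k,\ \rho_B k)$ is the pullback of $\varphi$ and $\psi$.

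First I would check commutativity of the square: by the distributive laws, $\varphi(\rho_A k) - \psi(\rho_B k) = (\varphi\rho_A - \psi\rho_B)k = \theta k = 0$, so $\varphi\rho_A k = \psi\rho_B k$. For the universal property, take any $D$ with morphisms $f\colon D \to A$, $g\colon D \to B$ satisfying $\varphi f = \psi g$. The universal property of the product yields a unique $\langle f,g\rangle\colon D \to A \oplus B$ with $\rho_A\langle f,g\rangle = f$ and $\rho_B\langle f,g\rangle = g$; then $\theta\langle f,g\rangle = \varphi f - \psi g = 0$, so the universal property of the kernel supplies a unique $h\colon D \to P$ with $kh = \langle f,g\rangle$. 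Composing with $\rho_A$ and $\rho_B$ gives $(\rho_A k)h = f$ and $(\rho_B k)h = g$, as required.

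The step that genuinely needs care---and the only point I would call an obstacle, a mild one---is uniqueness of $h$ \emph{as a mediating morphism for the pullback}, not merely as a factorization through $\ker\theta$. Suppose $h'\colon D \to P$ also satisfies $(\rho_A k)h' = f$ and $(\rho_B k)h' = g$. Then $kh'\colon D \to A \oplus B$ has $\rho_A(kh') = f$ and $\rho_B(kh') = g$, so by the uniqueness clause of the product's universal property $kh' = \langle f,g\rangle = kh$; since $k = \ker\theta$ is monic (a kernel is always monic, as shown earlier), we get $h' = h$. This finishes pullbacks.

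Finally, pushouts follow formally. Abelian categories are self-dual, so $\Aop$ is abelian and therefore has all pullbacks by the construction above; but a pushout of $\varphi'\colon C \to A$ and $\psi'\colon C \to B$ in $\A$ is exactly a pullback of the corresponding morphisms in $\Aop$. Hence all pushouts exist in $\A$. I expect no serious difficulty: once the difference morphism $\theta$ is written down, the whole argument is a bookkeeping exercise in the universal properties of products and kernels.
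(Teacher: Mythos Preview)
Your proof is correct and follows exactly the construction the paper has in mind: Exercise~\ref{exer}.7 gives the hint to set $\omega=\varphi\rho_A-\psi\rho_B$ and show that $(K_\omega,\rho_A\ker\omega,\rho_B\ker\omega)$ is the desired pullback, which is precisely what you do (with $\theta$ in place of $\omega$), and your duality argument for pushouts is the intended one.
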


\begin{proposition}\label{pullback}{\normalfont\cite[p. 578]{aluffi}}
Let
\[\begin{tikzcd}
A \times_C B \arrow[r,"\pi_B"] \arrow[d,"\pi_A"'] & B \arrow[d, "\psi"] \\
A \arrow[r,"\varphi"'] & C
\end{tikzcd}\]
be a pullback in an abelian category. If $\varphi$ is epic, then $\pi_B$ is also epic.
\end{proposition}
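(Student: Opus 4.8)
The plan is to trade the abstract pullback for its standard concrete model inside an abelian category, namely the kernel of a difference map out of the direct sum, and then run a short universal-property argument. Form the direct sum $A \oplus B$ with projections $\rho_A, \rho_B$, and set $\iota_A = \langle \id_A, 0 \rangle$ and $\iota_B = \langle 0, \id_B \rangle$ using the universal property of the product, so that $\rho_A \iota_A = \id_A$, $\rho_B \iota_B = \id_B$ and $\rho_B \iota_A = 0$. Consider the morphism $\theta := \varphi \rho_A - \psi \rho_B \colon A \oplus B \to C$ and let $k := \ker \theta \colon K_\theta \to A \oplus B$. Using the universal properties of the product and of the kernel, one checks that $(K_\theta, \rho_A k, \rho_B k)$ is a pullback of $\varphi$ and $\psi$ (this is where Proposition~\ref{exist} sits morally, though the verification is elementary). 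Since pullbacks are unique up to isomorphism and the comparison isomorphism carries epics to epics, it suffices to prove that $\rho_B k$ is epic for this model.

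The crux is a two-step reduction. First, $\theta$ is epic: indeed $\theta \iota_A = \varphi \rho_A \iota_A - \psi \rho_B \iota_A = \varphi$, and a morphism with an epic right factor is itself epic, so $\theta$ is epic because $\varphi$ is. Second, since $\theta$ is epic, the cokernel-form of Proposition~\ref{kercok} (its dual, available by the self-duality of abelian categories) yields $\theta = \cok(\ker \theta) = \cok k$; that is, $\theta$ satisfies the universal property of the cokernel of $k$.

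With that in hand I would finish using Proposition~\ref{add-mono-epi}(ii). Let $\beta \colon B \to X$ satisfy $\beta \rho_B k = 0$, i.e.\ $(\beta \rho_B) k = 0$. Since $\theta = \cok k$, there is a unique $\delta \colon C \to X$ with $\beta \rho_B = \delta \theta$. Precomposing with $\iota_A$ gives $0 = \beta \rho_B \iota_A = \delta \theta \iota_A = \delta \varphi$, and since $\varphi$ is epic this forces $\delta = 0$. Precomposing instead with $\iota_B$ gives $\beta = \beta \rho_B \iota_B = \delta \theta \iota_B = 0$. Hence every $\beta$ that kills $\rho_B k$ vanishes, so $\rho_B k$ — and therefore $\pi_B$ — is epic.

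I expect the main obstacle to be conceptual rather than computational: the only non-formal inputs are the implications $\varphi \text{ epic} \Rightarrow \theta \text{ epic} \Rightarrow \theta = \cok(\ker\theta)$, the second of which leans on the dual of Proposition~\ref{kercok} and hence on having already established that $\theta$ is epic. One must also take care when passing between the given pullback and the concrete model $K_\theta$, and when handling the biproduct identities $\rho_A \iota_A = \id_A$, $\rho_B \iota_A = 0$, and so on; defining $\iota_A, \iota_B$ through the product universal property makes those identities hold by construction and keeps the argument self-contained. Everything downstream of $\theta = \cok k$ is a routine chase of universal properties.
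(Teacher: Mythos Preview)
Your argument is correct. The paper does not prove this proposition in the body; it defers it to Exercise~\ref{sfl3}'s companion (Exercise~7.8) with the hint ``first show that $(C,\varphi,\psi)$ is the pushout of $\pi_A$ and $\pi_B$.'' Your route is in fact the concrete realization of that hint: passing to the explicit model $k=\ker\theta$ with $\theta=\varphi\rho_A-\psi\rho_B$, the identity $\theta=\cok k$ that you obtain from the dual of Proposition~\ref{kercok} is exactly the statement that the square is a pushout (with $C$ playing the role of the pushout object and $\varphi,\psi$ recovered from $\theta\iota_A,\theta\iota_B$). Where the paper's approach would then invoke the abstract pushout universal property to cancel a test morphism $\beta$, you instead invoke the cokernel universal property of $\theta$ directly and precompose with $\iota_A,\iota_B$; the two arguments are isomorphic.

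What your presentation buys is self-containment: you never need to name the pushout or verify its universal property separately, and the biproduct bookkeeping via $\iota_A=\langle\id_A,0\rangle$, $\iota_B=\langle 0,\id_B\rangle$ makes every step computable on the page. The paper's phrasing buys a cleaner conceptual takeaway (pullback squares along an epimorphism are bicartesian), which is a fact worth remembering in its own right. Either way, the essential input is the same: $\varphi$ epic $\Rightarrow$ $\theta$ epic $\Rightarrow$ $\theta=\cok(\ker\theta)$.
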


For $f\colon A \to B$, we call the kernel of $\cok f$ as the \textit{image} of $f$, which is a morphism $I_f \to B$ denoted as $\img f=\ker(\cok f)$. Dually, the \textit{coimage} of $f$ is $\coim f=\cok(\ker f)$. The reader should verify that in $\Ab$, the inclusion map $\Img f \to B$ and the natural map $A \to \Coim f$ (where $\Coim f=A/\Ker f$) are the image and coimage of $f$, respectively.

\begin{proposition}\label{img}{\normalfont\cite[p. 572]{aluffi}}
Let $f\colon A \to B$ be a morphism in an abelian category. Suppose that $\varphi$ is the unique morphism such that the following diagram commutes:
\begin{equation}\label{im}\begin{tikzcd}[row sep=14 pt]
&&B \arrow[dd,"\cok f"] \\
A \arrow[urr,bend left,"f"] \arrow[r,dashrightarrow,"\exists! \varphi"] \arrow[drr,bend right,"0"'] &I_f \arrow[ur,"\img f"] \arrow[dr,"0"'] \\
&&C_f
\end{tikzcd}\end{equation}
Then, $\varphi$ is epic.
\end{proposition}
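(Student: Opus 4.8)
The plan is to exhibit $\varphi$ as the cokernel of some morphism, since in an abelian category every cokernel is epic (a cokernel is a special kind of epimorphism — indeed in the definition of abelian category every epimorphism \emph{is} a cokernel, but more basically $\cok g$ is always epic, as already proved in the excerpt). The natural candidate is to show that $\varphi = \coim f = \cok(\ker f)$, or at least that $\varphi$ factors as an isomorphism composed with $\coim f$. Concretely, I would first produce the canonical factorization of $f$ through its image and show the induced map $A \to I_f$ in diagram (\ref{im}) agrees with the canonical map $A \to \Coim f$ followed by the standard iso $\Coim f \xrightarrow{\sim} I_f$; granting that, $\varphi$ is a cokernel and hence epic. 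However, setting up the full image-factorization machinery may be heavier than needed, so I would prefer the following more self-contained route.

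First I would reduce to showing: if $\mu\colon Q \to I_f$ is any morphism with $\varphi\mu = 0$, then $\mu = 0$ — no wait, that proves monic. To prove \emph{epic}, by Proposition \ref{add-mono-epi}(ii) it suffices to show that whenever $\beta\colon I_f \to Z$ satisfies $\beta\varphi = 0$, we have $\beta = 0$. So suppose $\beta\varphi = 0$. The key step is to use the defining universal property of $\ker f$ applied to a cleverly chosen morphism. Note $\img f = \ker(\cok f)$ is monic, and I want to relate $\beta$ to the kernel of $f$. Consider the composite $\beta$; since $\varphi$ is the unique map with $(\img f)\varphi = f$, the hypothesis $\beta\varphi = 0$ says $\beta$ kills the "image" of $A$ inside $I_f$. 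I would form the pullback (or use the kernel of $\beta$) to extract from $\beta\varphi = 0$ a factorization of $\varphi$ through $\ker\beta$, then push this through: because $\img f$ is monic, $\ker\beta \to I_f \xrightarrow{\img f} B$ is monic, and one checks that the composite $(\img f)(\ker\beta)$ together with the factorization of $f$ through it gives a morphism $K_f \to A$ compatible with $\ker f$. Finally, using that $\cok(\ker f)$ — i.e. $\coim f$ — and $\varphi$ enjoy the "same" universal property (both are epic maps out of $A$ whose further composition to $B$ is $f$, killed by no nonzero thing on the relevant side), I conclude $\beta$ factors through the zero object, forcing $\beta = 0$.

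The cleanest packaging is probably the first one: show directly that $\varphi$ \emph{is} (isomorphic to) $\cok(\ker f)$. For that I would (1) note $f(\ker f) = 0$, hence by the cokernel universal property there is a unique $\bar f\colon \Coim f \to B$ with $\bar f\,\coim f = f$; (2) show $\bar f$ is monic (this uses the abelian-category axioms and is a standard fact about the canonical factorization — it may itself need the argument that $\coim f \to \img f$ is an iso, which is exactly where Proposition \ref{isomonoepi} or the kernel/cokernel axioms enter); (3) deduce from $(\img f)\varphi = f = \bar f\,\coim f$ and the monicity of both $\img f$ and $\bar f$ that there is a unique iso $I_f \xrightarrow{\sim} \Coim f$ intertwining $\varphi$ with $\coim f$; (4) since $\coim f = \cok(\ker f)$ is epic and iso's are epic, $\varphi$ is epic.

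The main obstacle I expect is step (2): proving that the canonical map $\Coim f \to I_f$ (equivalently, that $\bar f$ is monic) is an isomorphism. This is precisely the content that makes "abelian" stronger than "additive with kernels and cokernels," and in the excerpt it is not stated as a black-box proposition, so I would either prove it from Definition \ref{abelian} (writing the monic $\img f$ as a kernel, writing the epic $\coim f$ as a cokernel, and matching them up via Proposition \ref{kercok}) or, more economically, bypass it by running the dual-style universal-property argument of the second paragraph, which only compares universal properties and never needs the iso explicitly. If I want the shortest valid proof, I would go with: $\varphi$ epic $\iff$ ($\beta\varphi=0 \Rightarrow \beta=0$), take such a $\beta$, use $(\img f)\varphi=f$ to get $\beta$-related factorizations, and close the loop against the universal property of $\cok(\ker f)$ — accepting a little diagram bookkeeping as the price for dodging the coimage–image isomorphism.
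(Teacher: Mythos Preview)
Your plan is valid but follows a heavier route than the one the paper intends. The paper leaves this proposition as an exercise with the hint: use Exercise~\ref{lemma} to show that $\img\varphi$ is epic, then conclude $\cok\varphi=\cok(\img\varphi)=0$. Concretely, factor $\varphi=(\img\varphi)\varphi'$; then $(\img f)(\img\varphi)$ is a monomorphism through which $f$ factors, so Exercise~\ref{lemma} yields $\psi\colon I_f\to I_\varphi$ with $(\img f)(\img\varphi)\psi=\img f$, whence $(\img\varphi)\psi=\id_{I_f}$. Thus $\img\varphi$ is split epic and monic, hence an isomorphism, and $(\cok\varphi)(\img\varphi)=0$ gives $\cok\varphi=0$, so $\varphi$ is epic by Proposition~\ref{kereq0}. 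Your route~(A) through $\varphi\cong\coim f$ is correct but, as you yourself diagnose, its real content is the coimage--image isomorphism, which is exactly the nontrivial consequence of the abelian axioms underlying this proposition; establishing it from Definition~\ref{abelian} is no lighter than the proposition itself. The paper's approach sidesteps this entirely by using only the minimality of $\img f$ among monic factorizations (Exercise~\ref{lemma}), never mentioning $\coim f$. Your route~(B) is too sketchy as written: the phrase ``close the loop against the universal property of $\cok(\ker f)$'' is precisely where the argument lives, and making it precise will force you back into either the coimage comparison or something equivalent to Exercise~\ref{lemma}.
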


We now have every ingredient we need to present our alternative proof of the short five lemma. The proof is a bit complex, but we specifically designed it to involve a lot of universal properties at once. We invite the reader to work through Exercise \ref{sfl3} to reap the potential benefits from this proof. Here is the main diagram for easy reference:

\begin{equation}\label{seq6}\begin{tikzcd}
0 \arrow[r] & A \arrow[r, "f"] \arrow[d, "\alpha"] & B \arrow[r,"g"] \arrow[d, "\beta"] & C \arrow[r] \arrow[d, "\gamma"] & 0 \\
0 \arrow[r] & A' \arrow[r, "f'"] & B' \arrow[r,"g'"] & C' \arrow[r] & 0
\end{tikzcd}\end{equation}

\begin{proof}[Alternative Proof of Theorem \ref{sfl2}{\normalfont(i)}] Suppose that $\alpha$ and $\gamma$ are monic. Since $\gamma g\ker\beta=g'\beta\ker\beta=0$, we have $g\ker \beta=0$. From the universal property of $\ker g$, we see that $\ker \beta$ \textit{factors through} $\ker g$ (i.e., $(\ker g)h=\ker \beta$ for some morphism $h$). By exactness of the first row of (\ref{seq6}), we have $(\cok f)(\ker \beta)=0$. Since we also have $(\cok f)f=0$, we obtain that both $\ker \beta$ and $f$ must factor through $\img f$. Therefore, there exist morphisms $\varphi$ and $\psi$ such that
\begin{gather}
\beta(\img f)\psi=\beta\ker \beta=0\label{eq1}\\
\beta(\img f)\varphi=\beta f=f'\alpha\label{eq2}
\end{gather}
where $\varphi$ is epic by Proposition \ref{img}. Let $P=A \times_{I_f} K_{\beta}$ be the pullback of $\varphi$ and $\psi$:
\begin{equation}\label{square}\begin{tikzcd}
P \arrow[d,"\pi_A"'] \arrow[r,"\pi_{K_\beta}"] & K_\beta \arrow[d,"\psi"] \\
A \arrow[r, "\varphi"'] & I_f
\end{tikzcd}\end{equation}
By multiplying both sides of (\ref{eq1}) and (\ref{eq2}) on the right by $\pi_{K_\beta}$ and $\pi_A$, respectively, we obtain $f'\alpha\pi_A=0$ from the commutativity of (\ref{square}). Since $f'$ and $\alpha$ are monic, we have $\pi_A=0$, which in turn gives us
\[(\ker\beta)\pi_{K_\beta}=(\img f)\psi\pi_{K_\beta}=(\img f)\varphi\pi_A=0.\]
Since $\varphi$ is epic, it follows from Proposition \ref{pullback} that $\pi_{K_\beta}$ is epic. Hence, $\ker\beta=0$, and we are done.
\end{proof}

\section{Exercises}\label{exer}

We compile a problem set concerning diagram chasing and diagrammatic arguments in abelian categories. Some of the problems are taken from the propositions in the main text whose proofs are omitted.

\begin{exercise}
Consider the following commutative diagram of abelian groups (or alternatively, $R$-modules):
\[\begin{tikzcd}
&0 \arrow[d]&0 \arrow[d]&0 \arrow[d] \\
0 \arrow[r] & A_1 \arrow[r, "f_1"] \arrow[d, "\alpha_1"] & B_1 \arrow[r,"g_1"] \arrow[d, "\beta_1"] & C_1 \arrow[r] \arrow[d, "\gamma_1"] & 0 \\
0 \arrow[r] & A_2 \arrow[r, "f_2"] \arrow[d, "\alpha_2"] & B_2 \arrow[r,"g_2"] \arrow[d, "\beta_2"] & C_2 \arrow[r] \arrow[d, "\gamma_2"] & 0 \\
0 \arrow[r] & A_3 \arrow[r, "f_3"] \arrow[d] & B_3 \arrow[r,"g_3"] \arrow[d] & C_3 \arrow[r] \arrow[d] & 0 \\
&0&0&0
\end{tikzcd}\]
Suppose that all three columns of the diagram are exact. Use diagram chasing to prove that if the bottom two rows of the diagram are exact, the top row is also exact. By appealing to the opposite category, prove that the bottom row is exact if the top two rows are exact. This is known as the \textit{nine lemma}.
\end{exercise}

\begin{exercise}
Prove Proposition \ref{kercok}. (Hint: use the fact that $f=\ker \varphi$ for some morphism $\varphi$.)
\end{exercise}

\begin{exercise}
Prove Proposition \ref{isomonoepi}. (Hint: apply Proposition \ref{kercok} and its dual statement to construct a right and left inverse of $f$, respectively.)
\end{exercise}

\begin{exercise}
Prove Proposition \ref{sum}. (Hint: show that there is a natural map $A \amalg B \to A \times B$ that is both monic and epic, and then use Proposition \ref{isomonoepi}.)
\end{exercise}

\begin{exercise}
Show that the conditions $gf=0$ and $(\cok f)(\ker g)=0$ in Definition \ref{ext2} can be replaced by $\img f=\ker g$. (Hint: use Propositions \ref{kercok} and \ref{ext3} to avoid a direct diagrammatic approach.)
\end{exercise}

\begin{exercise}\label{lemma}
Let $f\colon A \to B$ be a morphism in an abelian category, and let $\varphi\colon A \to I_f$ be the unique morphism making the diagram in (\ref{im}) commute. Show that for any morphism $g\colon A \to C$ and monomorphism $h\colon C \to B$ such that $hg=f$, there exists a unique morphism $\psi\colon I_f \to C$ such that the following diagram commutes:
\[\begin{tikzcd}
&C \arrow[dr,tail,"h",bend left]&\\
A \arrow[r,"\varphi"]\arrow[ur,"g",bend left]\arrow[rr,"f"',bend right]&I_f \arrow[r,tail,"\img f"]\arrow[u,dashrightarrow,"\exists!\psi"]&B\\
\end{tikzcd}\]
\end{exercise}

\begin{exercise}
Prove Proposition \ref{exist}. (Hint: given $A,B \in \Obj(\A)$ and their product $(A \oplus B,\rho_A,\rho_B)$, show that if $\omega=\varphi\rho_A-\psi\rho_B$, then $(K_\omega,\rho_A\ker\omega,\rho_B\ker\omega)$ is the pullback of $\varphi\colon A \to C$ and $\psi\colon B \to C$.)
\end{exercise}

\begin{exercise}
Prove Proposition \ref{pullback}. (Hint: first show that $(C,\varphi,\psi)$ is the pushout of $\pi_A$ and $\pi_B$.)
\end{exercise}

\begin{exercise}
Prove Proposition \ref{img}. (Hint: use the result from Exercise \ref{lemma} to show that $\img \varphi$ is epic, and then conclude that $\cok \varphi=\cok (\img \varphi)=0$.)
\end{exercise}

\begin{exercise}\label{sfl3}
Prove Theorem \ref{sfl2}(ii) by mirroring the proof of its dual statement given in Section \ref{alt}. (Hint: formulate and apply the dual statements of Propositions \ref{pullback} and \ref{img}.)
\end{exercise}

\section{Final remarks}

The broad ideas of diagram chasing and diagrammatic arguments have been presented in previous sections. Other than the short five lemma, one of the more important diagram lemmas which has not covered here is the snake lemma, where a diagram chase can be used to construct the aptly named "snaking morphism". For a more thorough exposition on the subject, we suggest the reader to consult \cite{aluffi}.

Of course, many important categorical concepts are unfortunately not included in this article, among which are functors, natural transformations and (co)limits. The universal property of (co)limits is of particular importance due to its generalizing other universal properties we have seen, such as products, pullbacks and kernels. We refer to \cite{leinster} for a closer look at these concepts, and \cite{maclane-categories} for a more advanced reference. Another topic of interest would be a more thorough exposition on homological algebra, of which \cite{maclane-homology} and \cite{rotman} offer a classic and modern treatment, respectively.

\section*{Acknowledgements}

The author would like to thank Hengki Tasman and Wed Giyarti of Universitas Indonesia, both of whom gave helpful supervision for some parts of this article and the author's undergraduate thesis.

\end{document}